\newtheorem{teorema}{Theorem}[section]
\newtheorem{proposicion}[teorema]{Proposition}
\begin{document}
\title[On Clifford bundles and Finsler spaces]{A note on Clifford bundles and certain Finsler type spaces}

\maketitle
\begin{center}
\author{Ricardo Gallego Torrom\'e\footnote{email: rigato39@gmail.com}}
\end{center}\emph{}
\begin{center}
\address{Department of Mathematics\\
Faculty of Mathematics, Natural Sciences and Information Technologies\\
University of Primorska, Koper, Slovenia}
\end{center}

\begin{abstract}
We study a relation between certain extensions of the Clifford bundle and Finsler type structures that naturally generalize the standard Clifford relation between (pseudo)-Riemannian metric structures and Dirac matrices. We show for flat metrics that there is a triangle map between Finsler structures constructed from an (pseudo)-Riemannian metric and $1$-forms on $M$, the extension of the Clifford bundle and relevant Dirac type operators.
\end{abstract}

\section{Introduction}
 Let $(M,g)$ be a four dimensional (pseudo)-Riemannian space. The Clifford bundle is constructed canonically from the Clifford algebras $C(T_x M,g_x)$ associated to each fiber $T_xM$ or, due to the isomorphism induced by $g$, from the dual bundle with fibers $T^*M$ \cite{Benn Tucker 1988}. After introducing natural coordinates, a generic tangent vector $y=\,y^i\,\frac{\partial}{\partial x^i}|_{x}\in \,T_xM$ is represented by the components $\{y^\mu,\,\mu=1,2,3,4\}$. Then for each Clifford algebra  $C(T_x M,g_x)$ we have  pointwise relations
\begin{align}
y^\mu{\bf e}_\mu (x) \,  z^\nu{\bf e}_\nu(x)+\,z^\nu{\bf e}_\nu(x)\, y^\mu {\bf e}_\mu(x) =\,2\,g_x(y,z)
\label{Clifford relation}
\end{align}
for each $y,z\in\,T_x M$ and where ${\bf e}_\mu$ for $\mu =\,1,2,3,4$ are the generators of the Clifford algebra $C(T_x M,g_x)$. In this work we report a generalization of the Clifford relation \eqref{Clifford relation} to certain sheaves constructed from the geometric data of certain types of Finsler spacetimes in dimension four.

The theory of Finsler spaces of positive signature can be formulated in the context of robust geometric frameworks \cite{Berwald,Cartan Exposes II,Chern1948}. The case of Finsler structures with Lorentzian metrics still lacks of a general framework, but such models have found many applications in mathematical physics, ranging from application in emergent quantum mechanics \cite{Ricardo06,Ricardo2014}, geometries of extended relativistic gravitational models \cite{GallegoPiccioneVitorio:2012}, dual dispersion relations \cite{Pfeifer 2019} and as a spacetime geometry compatible with clock hypothesis of relativity \cite{Ricardo 2017}. It is this ample range of applicability of the Finsler concept of space that partially motivates many studies aiming to extend (pseudo)-Riemannian theory to the Finsler setting.

We show in this work that the Clifford relation \eqref{Clifford relation} can be extended to Randers spaces and other types of Finsler spaces determined by vector fields defined on $M$. This extension is achieved by considering certain sub-algebras of a free algebra constructed from the Clifford algebra $C(T_x M,g_x)$ for each $x\in\,M$ and the trace operator. We also show that the elements of the algebra associated to Finsler type metrics are related with partial first order differential operators. Such relation is established for flat metrics, but the theory can be systematically extended to arbitrary spin manifolds.

The construction of Clifford algebras and Clifford bundles from Finsler type structures and Lagrange structures has been investigated by several authors \cite{Vacaru Stavrinos Gaburov Gonta}. However, The method followed in this work makes use of the standard Clifford bundle of a (pseudo)-Riemannian metric and the trace operator to define a particular sheaf, instead of providing a direct generalization using the Lagrange structure.

 Finally, let us remark that a formal point of view has been adopted, discussing a generalization of the relation \eqref{Clifford relation} to certain spaces where formal Finsler structures are defined, but disregarding most of the regularity issues concerning the theory of Finsler structures with Lorentzian signature.

 \section{On certain algebras obtained from the Clifford algebra in dimension four} Let $(M,g)$ be a four dimensional Lorentzian space. Most of our discussion requires only local methods. Therefore, we will make extensive use of local pointwise, orthonormal frames, where $g_x :=g|_x$ for  $x\in\,M$ is diagonal. For the components of geometric objects  in local orthonormal frames we use Latin indices $\{y^i,\,i=1,2,3,4\}$.  From now on we are going to work on orthonomal frames, if anything else is not stated. We also adopt Einstein's index convention.
 In an orthonormal frame at $x\in M$ the Clifford relations adopt the form
 \begin{align}
 \gamma_i\, \gamma_j+\gamma_j\,\gamma_i=\,2\,\eta_{ij}\,\mathbb{I}_4,\quad i,j=1,2,3,4,
 \label{Clifford relation in orthogonal}
 \end{align}
 where $\eta_{ij}$ are the components of the metric $g$ in the orthonormal frame and $\gamma_i =\,1,2,3,4$ are $4\times 4$ Dirac gamma matrices.  In this setting, let us consider the following linear combinations:
    \begin{displaymath}
    \mathcal{M}=\frac{\gamma_i y^i}{|\eta(y,y)|^{1/2}}-\mathbb{I}_4,\quad\tilde{\mathcal{M}}=\frac{\gamma_i y^i}{|\eta(y,y)|^{1/2}}+\mathbb{I}_4,
    \end{displaymath}
    where $\eta(y,y):=\,\eta_{ij}\,y^i\,y^j$ for each $y\in\,T_xM$ such that $\eta(y,y)\neq 0$.
    One can also form the linear combinations
    \begin{displaymath}
    \mathcal{F}_A=\gamma_i y^i-A_iy^i\,\mathbb{I}_4,\quad \tilde{\mathcal{F}}_A=\gamma_i y^i\,+A_iy^i\,\mathbb{I}_4,
    \end{displaymath}
    \begin{displaymath}
    \mathcal{F}_{AB}=\gamma_iy^i\,\gamma_j y^j-(A_iy^i)(B_jy^j) \mathbb{I}_4,\quad \tilde{\mathcal{F}}_{AB}=\gamma_iy^i\,\gamma_j y^j+(A_iy^i)(B_jy^j) \mathbb{I}_4
    \end{displaymath}
    and similarly for $\mathcal{F}_{ABC}$, etc,... where $A,B,C,...\in \,\Lambda^1 M$. The combinations $\mathcal{M}$, $\tilde{\mathcal{M}}$, $\mathcal{F}_A$, $\tilde{\mathcal{F}}_A$, $\mathcal{F}_{AB}$, $\tilde{\mathcal{F}}_{AB}$, etc...  generate the complex free algebra
     \begin{align*}
     \mathcal{A}_\mathbb{C}(x,y) :=& \{\lambda_{\mathcal{M}} \mathcal{M}+\,{\lambda}_{\tilde{\mathcal{M}}}\tilde{\mathcal{M}}+\,\lambda_{\mathcal{F}_A} \mathcal{F}_A+\,{\lambda}_{\tilde{\mathcal{F}}_A}\tilde{\mathcal{F}}+\lambda_{\mathcal{M}\,\mathcal{F}} \mathcal{M}\cdot \mathcal{F}_A\,\\
& +\lambda_{\mathcal{M}\tilde{\mathcal{F}}_A}\,\mathcal{M}\cdot\tilde{\mathcal{F}}_A\,+\lambda_{\tilde{\mathcal{M}}\mathcal{F}_A}\,\tilde{\mathcal{M}}\cdot\mathcal{F}_A\,+\lambda_{\tilde{\mathcal{M}}\,\tilde{\mathcal{F}}} \tilde{\mathcal{M}}\cdot \tilde{\mathcal{F}}_A\\
& +\lambda_{\mathcal{F}_{AB}} \mathcal{F}_{AB}\,+...,\\
&  \lambda_{\mathcal{M}},\,\lambda_{\tilde{\mathcal{M}}},\,\lambda_{\mathcal{F}_A},\lambda_{\tilde{\mathcal{F}}_A},\,
\lambda_{\mathcal{M}\mathcal{F}_A},\,\lambda_{\mathcal{F}\tilde{\mathcal{F}}_A},\,\lambda_{\tilde{\mathcal{M}}\tilde{\mathcal{F}}_A},\,\lambda_{\tilde{\mathcal{M}}\mathcal{F}_A},\,\lambda_{\mathcal{F}_{AB}},...\in \mathbb{C}\}
      \end{align*}
      for each $y\in\,T_xM$ and $x\in\,M$.
Note that the algebra $\mathcal{A}_\mathbb{C} (x,y)$ is generated by $y$-homogeneous elements. Indeed, the algebra can be decomposed as
\begin{align*}
\mathcal{A}_\mathbb{C} (x,y)=\,\oplus^{+\infty}_{k=0}\,\mathcal{A}^k_\mathbb{C}(x,y),
\end{align*}
with
\begin{align*}
 \mathcal{A}^0_\mathbb{C}(x,y)=\,&\left\{\lambda_{\mathcal{M}}\,\mathcal{M}+\,\lambda_{\tilde{\mathcal{M}}}\,\tilde{\mathcal{M}},\,\lambda_{\mathcal{M}},\lambda_{\tilde{\mathcal{M}}}\in\,\mathbb{C}\right\},\\
& \mathcal{A}^1_\mathbb{C}(x,y)=\,\{\lambda_{\mathcal{M}\mathcal{F}_A}\,\mathcal{M}\cdot\mathcal{F}_A\,+
\lambda_{\mathcal{M}\tilde{\mathcal{F}}_A}\,\mathcal{M}\cdot\tilde{\mathcal{F}}_A\,+\lambda_{\tilde{\mathcal{M}}\mathcal{F}_A}\,\tilde{\mathcal{M}}\cdot\mathcal{F}_A\, \\
&+\lambda_{\tilde{\mathcal{M}}\tilde{\mathcal{F}}_A}\,\tilde{\mathcal{M}}\cdot\tilde{\mathcal{F}}_A\,+\lambda_{\mathcal{F}}\,\mathcal{F}_A\,+\lambda_{\tilde{\mathcal{F}}_A}\,\tilde{\mathcal{F}}_A,\\
&\lambda_{\mathcal{M}\mathcal{F}_A},\,\lambda_{\mathcal{F}\tilde{\mathcal{F}}_A},\,\lambda_{\tilde{\mathcal{M}}\tilde{\mathcal{F}}_A},\,\lambda_{\tilde{\mathcal{M}}\mathcal{F}_A},\,\lambda_{\mathcal{F}},\,\lambda_{\tilde{\mathcal{F}}_A}\,
\in\,\mathbb{C} \},\\
& ...
\end{align*}
Each $\mathcal{A}^k_\mathbb{C}(x,y)$ is an $y$-homogeneous of degree $k$, complex vector space.
Furthermore,  $\mathcal{A}_\mathbb{C}(x,y)$ is a commutative algebra,
\begin{proposicion}
$\mathcal{A}_\mathbb{C}(x,y)$ is a graded, commutative algebra:
\begin{align*}
\left[\,\sum^{+\infty}_{k=1}\,\alpha_k \mathcal{G}_k,\sum^{+\infty}_{l=1}\,\beta_l \mathcal{G}_l\right]=0,
\end{align*}
for any pair of finite linear combinations of the form
\begin{align*}
\sum^{+\infty}_{k=1}\,\alpha_k \mathcal{G}_k =\,&\alpha_{\mathcal{M}} \mathcal{M}+\,\tilde{\alpha}_{\tilde{\mathcal{M}}}\tilde{\mathcal{M}}+\,\alpha_A \mathcal{F}_A+\,\tilde{\alpha}_{\tilde{\mathcal{F}}}\tilde{\mathcal{F}}+\\
     &+\alpha_{\mathcal{M}\,\mathcal{F}} M\cdot \mathcal{F}_A\,+\alpha_{AB} \mathcal{F}_{AB}\,+...
\end{align*}
\begin{align*}
\sum^{+\infty}_{l=1}\,\beta_l \mathcal{G}_l =\,&\beta_{\mathcal{M}} \mathcal{M}+\,\tilde{\beta}_{\tilde{\mathcal{M}}}\tilde{\mathcal{M}}+\,\beta_A \mathcal{F}_A+\,\tilde{\beta}_{\tilde{\mathcal{F}}}\tilde{\mathcal{F}}+\\
     &+\beta_{\mathcal{M}\,\mathcal{F}} M\cdot \mathcal{F}_A\,+\beta_{AB} \mathcal{F}_{AB}\,+...
\end{align*}
with only a number of coefficients in $\{\alpha_k,\,k=1,2,...\}$ and $\{\beta_k=1,2,...\}$ non-zero.
\end{proposicion}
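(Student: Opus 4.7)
The plan is to reduce the proposition to the single observation that $Y := \gamma_i y^i$ satisfies $Y^2 = \eta(y,y)\,\mathbb{I}_4$, so that every generator of $\mathcal{A}_\mathbb{C}(x,y)$ sits inside the two-dimensional commutative subalgebra of $\mathrm{End}(\mathbb{C}^4)$ spanned by $\{\mathbb{I}_4, Y\}$. Commutativity of $\mathcal{A}_\mathbb{C}(x,y)$ will then be immediate.

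First I will use the Clifford relation \eqref{Clifford relation in orthogonal} to compute
\begin{align*}
Y^2 \;=\; y^i y^j\,\gamma_i\gamma_j \;=\; \tfrac{1}{2}\,y^i y^j\,(\gamma_i\gamma_j + \gamma_j\gamma_i) \;=\; \eta_{ij}\,y^i y^j\,\mathbb{I}_4 \;=\; \eta(y,y)\,\mathbb{I}_4.
\end{align*}
From this, $Y^{2k} = \eta(y,y)^{k}\,\mathbb{I}_4$ and $Y^{2k+1} = \eta(y,y)^{k}\,Y$ for every $k \geq 0$, so the unital subalgebra $\mathcal{S}(x,y) \subset \mathrm{End}(\mathbb{C}^4)$ generated by $Y$ is spanned as a complex vector space by $\{\mathbb{I}_4, Y\}$. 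Since $Y$ commutes with itself and with $\mathbb{I}_4$, the subalgebra $\mathcal{S}(x,y)$ is commutative.

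Next I will verify, generator by generator, the inclusion $\mathcal{A}_\mathbb{C}(x,y) \subset \mathcal{S}(x,y)$. The elements $\mathcal{M}, \tilde{\mathcal{M}}, \mathcal{F}_A, \tilde{\mathcal{F}}_A$ are manifestly $\mathbb{C}$-linear combinations of $\mathbb{I}_4$ and $Y$ (the coefficients depending on $y$, on $A$, and on $|\eta(y,y)|^{1/2}$). For $\mathcal{F}_{AB}$ and $\tilde{\mathcal{F}}_{AB}$ the product $\gamma_i y^i\,\gamma_j y^j = Y^2$ collapses to $\eta(y,y)\,\mathbb{I}_4$, so these generators are scalar multiples of $\mathbb{I}_4$. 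By induction on the number of $1$-form labels, every $\mathcal{F}_{A_1\cdots A_k}$, and every product of $\mathcal{M}$'s, $\tilde{\mathcal{M}}$'s with $\mathcal{F}_\bullet$-generators, reduces via iterated application of $Y^2 = \eta(y,y)\,\mathbb{I}_4$ to an element of $\mathcal{S}(x,y)$. Since each $y$-homogeneous summand $\mathcal{A}^k_\mathbb{C}(x,y)$ is obtained from such products by taking $\mathbb{C}$-linear combinations, the whole algebra sits inside $\mathcal{S}(x,y)$.

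Commutativity of arbitrary finite linear combinations then follows from $[\mathbb{I}_4, Y] = 0$, which gives the desired vanishing bracket. The grading by $y$-homogeneous degree has already been recorded in the excerpt and is respected by the reduction since $Y$ is $y$-homogeneous of degree one. The main obstacle will be the bookkeeping needed to confirm that every formal mixed product appearing in the definition of $\mathcal{A}_\mathbb{C}(x,y)$ indeed lands in $\mathcal{S}(x,y)$; this is routine after $Y^2 = \eta(y,y)\mathbb{I}_4$ is in hand, but the proliferation of symbols $\mathcal{M}, \tilde{\mathcal{M}}, \mathcal{F}_{A}, \mathcal{F}_{AB}, \ldots$ makes the enumeration somewhat tedious.
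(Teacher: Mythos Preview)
Your proof is correct. The paper's own proof is very terse: it simply lists a handful of pairwise commutators of generators, $[\mathcal{M},\tilde{\mathcal{M}}]=0$, $[\mathcal{M},\mathcal{F}_A]=0$, $[\mathcal{F}_A,\mathcal{F}_B]=0$, etc., declares them ``straightforward,'' and concludes. Your argument is the structural explanation behind those vanishings: you isolate the single identity $Y^2=\eta(y,y)\,\mathbb{I}_4$ and observe that every generator lies in the two-dimensional commutative subalgebra $\mathcal{S}(x,y)=\mathrm{span}_{\mathbb{C}}\{\mathbb{I}_4,Y\}$, which is closed under multiplication. This is the same mathematics, but your route makes explicit \emph{why} all the commutators in the paper's list vanish at once, and it dispenses with the need to enumerate them case by case; the trade-off is that you spend a sentence checking closure of $\mathcal{S}(x,y)$ under products, which the paper's brute-force listing sidesteps.
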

\begin{proof}
The following relations follow straightforward,
\begin{align*}
& [\mathcal{M},\tilde{\mathcal{M}}]=0,\,
 [\mathcal{M},\mathcal{F}_A]=0,\,
 [\tilde{\mathcal{M}},\mathcal{F}_A]=0,\\
& [F_A,\mathcal{F}_B]=0,\,
[\mathcal{M},\mathcal{F}_{AB}]=0,\,
 ....,
\end{align*}
for $A,B,...\in\,\Lambda^1 M$, from which the result follows.
\end{proof}
The  collection of algebras $\mathcal{A}_{\mathbb{C}}:=\{\mathcal{A}_\mathbb{C}(x,y),\,(x,y)\in\,TM\setminus NC\}$ determines a sheaf over bundle $TM\setminus NC\to M$,
 \begin{align*}
 \pi_\mathcal{A}:\mathcal{A}_{\mathbb{C}}\to \,TM\setminus NC.
 \end{align*}
 The  stalk over $(x,y)$ is the algebra $\mathcal{A}_\mathbb{C}(x,y)$. We call $\mathcal{A}_{\mathbb{C}}$ the {\it Clifford sheaf} over $TM\setminus NC$.

The trace operation
\begin{align*}
Tr:\mathcal{A}_{\mathbb{C}}\to\,\mathbb{C}
\end{align*}
satisfies the usual rules as for Dirac matrices. Specifically, the trace operator
is a cyclic operator such that in an orthonormal basis the generators $\{\gamma_i,\,i=1,2,3,4\}$ satisfies usual Dirac type relations,
 \begin{align*}
 & Tr (\gamma_i)=0,\quad i=1,2,3,4,\\
 & Tr (\gamma_i\,\gamma_j)=\,4 \,\eta_{ij},\quad i,j=1,2,3,4,\\
 & Tr \left(\gamma_i\,\gamma_j\,\gamma_k\right)=0,\quad i,j,k=1,2,3,4,\\
 & Tr \left(\gamma_i\,\gamma_j\,\gamma_k\,\gamma_l\right)=\,4\left(\eta_{ij}\,\eta_{kl}\,-\eta_{ik}\,\eta_{jl}+\,\eta_{il}\,\eta_{jk}\right),\quad i,j,k,l=\,1,2,3,4,\\
 & ...
 \end{align*}
 These relations will play a relevant role in our considerations.
\begin{proposicion}\label{Randers spinor relation}
For $|g(y,y)|^{1/2}\neq 0$, the following relation holds:
\begin{align}
Tr(\mathcal{M}\cdot\mathcal{F}_A)= \,4\left(\mp\,|g(y,y)|^{1/2}+\,A_j\,y^j\right),
\end{align}
where the negative sign corresponds to the case when $\eta(y,y)<0$ and the positive sign corresponds to the case $\eta(y,y)>0$.
     \end{proposicion}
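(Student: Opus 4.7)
The plan is to prove the identity by a direct expansion of the product $\mathcal{M}\cdot\mathcal{F}_A$ followed by the trace identities for Dirac gamma matrices collected just before the statement. Substituting the definitions, the product expands to
\begin{align*}
\mathcal{M}\cdot\mathcal{F}_A \,=\, \frac{\gamma_i y^i\,\gamma_j y^j}{|\eta(y,y)|^{1/2}} \,-\, \frac{A_j y^j\,\gamma_i y^i}{|\eta(y,y)|^{1/2}} \,-\, \gamma_j y^j \,+\, A_j y^j\,\mathbb{I}_4,
\end{align*}
so that linearity of $Tr$ reduces the problem to evaluating $Tr(\gamma_i\gamma_j)$, $Tr(\gamma_i)$, and $Tr(\mathbb{I}_4)$ on each summand.

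The second step is simply a term-by-term application of $Tr(\gamma_i)=0$, $Tr(\gamma_i\gamma_j)=4\eta_{ij}$ and $Tr(\mathbb{I}_4)=4$. The two middle terms are linear in a single gamma matrix and so vanish; the first term contributes $\tfrac{4\eta_{ij}y^i y^j}{|\eta(y,y)|^{1/2}}=\tfrac{4\eta(y,y)}{|\eta(y,y)|^{1/2}}$; and the last term contributes $4A_j y^j$. In the orthonormal frame $\eta(y,y)=g(y,y)$, and $|\eta(y,y)|^{1/2}\neq 0$ by hypothesis, so no division problems arise.

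The final step is a sign split: one writes $\tfrac{\eta(y,y)}{|\eta(y,y)|^{1/2}}=\mathrm{sign}(\eta(y,y))\,|\eta(y,y)|^{1/2}$, which reproduces the $\mp$ of the statement, with the minus corresponding to $\eta(y,y)<0$ and the plus to $\eta(y,y)>0$. The only real obstacle is bookkeeping the sign conventions in the notation $\mp|g(y,y)|^{1/2}$ consistently with the orthonormal-frame identification $\eta(y,y)=g(y,y)$; there is no conceptual difficulty, and the proof is a short application of the trace algebra.
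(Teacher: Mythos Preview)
Your proof is correct and follows exactly the same approach as the paper: expand $\mathcal{M}\cdot\mathcal{F}_A$ in an orthonormal frame and apply the trace identities $Tr(\gamma_i)=0$, $Tr(\gamma_i\gamma_j)=4\eta_{ij}$, $Tr(\mathbb{I}_4)=4$, then read off the sign of $\eta(y,y)/|\eta(y,y)|^{1/2}$. Your write-up is in fact slightly more explicit than the paper's, which skips the four-term expansion and contains a sign typo ($+\mathbb{I}_4$ instead of $-\mathbb{I}_4$) in its displayed computation.
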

\begin{proof}We consider orthonormal coordinates, where the metric $g$ has components $\eta_{ij}$.
By direct computation, using the standard properties of the trace of the Dirac matrices and since $g(x,y)$ is numerically equal to $\eta_{ij}y^i\,y^j$, we have
\begin{align*}
Tr\left(\mathcal{M}\,\mathcal{F}_A\right) & =\,Tr\left(\left(\frac{\gamma_i y^i}{|g(y,y)|^{1/2}}+\mathbb{I}_4\right)\cdot \left( \gamma_j y^j-\,A_j\, y^j\,\mathbb{I}_4\right) \right)\\
& =\,4\,\left(\frac{\eta_{ij}y^iy^j}{|g(y,y)|^{1/2}}+ A_j\,y^j\right)\\
& =\,4\,\left(\mp|g(y,y)|^{1/2}+\,A_j\,y^j\right),
\end{align*}
where the negative sign corresponds to $\eta_{ij}y^i\,y^j<0$ and the positive sign corresponds to $\eta_{ij}\,y^i y^j>0$. Then the result follows.
\end{proof}
Similarly, one obtains the relation
\begin{align}
Tr(\mathcal{M}\cdot\tilde{\mathcal{F}}_A)= \,4\left(\mp\,|g(y,y)|^{1/2}-\,A_j\,y^j\right),
\end{align}
for $g(y,y)<0$.
\section{Examples}
We discuss in this section examples of metric structures that can be described within the framework of the algebra $\mathcal{A}_{\mathbb{C}}$. In all these examples, it is assumed that $(M,g)$ is either a Riemannian or a Lorentzian space. Hence we use the Clifford relation in an orthonormal basis \eqref{Clifford relation in orthogonal} in all our calculations.
\subsection{Lorentzian manifold}
The element $\mathcal{M}\in\,\mathcal{A}_{\mathbb{C}}$ provides examples of Riemannian norms,
     \begin{align}
     Tr(\mathcal{M}\cdot\tilde{\mathcal{M}})+4=\,\pm4|g(y,y)|^{1/2},
     \label{MM*}
     \end{align}
     the sign depending on the temporal $(-)$ or spacelike $(+)$ character of $y\in \,T_xM$.
     Similarly, we have
         \begin{align}
     Tr(\mathcal{M}\cdot{\mathcal{M}})-4=\,\pm 4|g(y,y)|^{1/2},
     \label{MM}
     \end{align}
     For null vectors $g(y,y)$, one defines the limit
     \begin{align}
     \lim_{g(y,y)\to 0}\, \frac{1}{2}\,Tr(\mathcal{M}\cdot{\mathcal{M}})-1=\,\frac{1}{2}\,Tr(\mathcal{M}\cdot\tilde{\mathcal{M}})-1 =\,0.
     \end{align}
Therefore, the metric structure $(M,g)$ is encoded in $\mathcal{M}\in\,\mathcal{A}_{\mathbb{C}}$.

\subsection{Angular metric type structure}
Given a vector field $A$, the trace
 $Tr(\mathcal{F}_A\,\tilde{\mathcal{F}}_A)$ formally provides a physical relevant metric,
     \begin{align}
     Tr(\mathcal{F}_A\cdot \tilde{\mathcal{F}}_A)=\,4\left(g(y,y)-(A\cdot y)^2\right),
     \label{SpinorBeam fluid}
     \end{align}
 where for the $1$-form $A\in \,\Lambda M$ and each tangent vector $x\in\,T_xM$, one has $A\cdot y :=\,A_i (x)\,y^i$.
In order that $(M,g)$ is a non-degenerate spacetime structure, further constrains on $A$ must be imposed. In particular, it is required that the Hessian of $L_A$
\begin{align*}
L_A(x,y)=g(y,y)-(A\cdot y)^2
\end{align*}
to  be non-degenerated. Since the fundamental tensor of $L_A$ is of the form
\begin{align*}
g_{ij}:=\frac{1}{2}\,\frac{\partial^2 L_A(x,y)}{\partial y^i\partial y^j}=\,\eta_{ij}(x)-A_i(x)A_j(x),
\end{align*}
the sufficient condition for this regularity is that $|g(y,y)|<1$.

 Similarly,
      \begin{align}
     Tr(\mathcal{F}_A\cdot {\mathcal{F}}_A)=\,4\left(g(y,y)+(A\cdot y)^2\right).
      \label{SpinorBeam fluid 2}
     \end{align}
The Hessian of the corresponding Finsler structure is regular if $|g^*(A,A)|<1$, where $g^*$ is the dual metric of $g$.

\subsection{Randers type structures}
An Asanov-Randers space \cite{Ricardo Randers-Lorentz} is characterized by a Finsler function $F_A:\tilde{N}\to ]-\infty,0]$ of the form
\begin{align}
F_A(x,y)=\,\left(-g(y,y)\right)^{1/2}\,+A\cdot \,y,\quad g(y,y)\leq 0,
\label{Randers space Asanov}
\end{align}
where $g$ is a metric of Lorentzian signature.
As a consequence of Proposition \ref{Randers spinor relation}, we have that
for $y\in\,T_xM$ such that $g(y,y)<0$, the relation
\begin{align}
-\frac{1}{4}\,Tr(\mathcal{M}\cdot \mathcal{F}_A)=\,F_A(x,y)
\label{spinor form of Randers space}
\end{align}
holds good.
Similarly, if $g(y,y)>0$, then the expression
\begin{align}
\frac{1}{4}\,F(\mathcal{M}\cdot\tilde{\mathcal{F}}_A)=\,F_A(x,y)
\label{spinor form of Randers space 2}
\end{align}
holds good.
Also note that
\begin{align*}
-\lim_{g(y,y)\to 0}\frac{1}{4}\,Tr(\mathcal{M}\cdot \mathcal{F}_A)=\,\lim_{g(y,y)\to 0}\frac{1}{4}\,Tr(\mathcal{M}\cdot \tilde{\mathcal{F}}_A)=\,A_j\,y^j.
\end{align*}
Therefore, it is natural to define
\begin{align}
F_A(x,y)=\,A_j\,y^j,\quad \textrm{if}\quad g(y,y)=0.
\label{spinor form of Randers space 3}
\end{align}
Gluing together the relations \eqref{spinor form of Randers space}, \eqref{spinor form of Randers space 2} and \eqref{spinor form of Randers space 3} we have an example of Randers spacetime as introduced in the sheave-theoretical approach to Finsler spacetimes introduced in \cite{Ricardo Randers-Lorentz}. To avoid degeneracies of the Finsler structure associated with Lorentz transformations, it is convenient to consider local $1$-forms $A$ defined on open subsets of $M$.
For each of the local representatives
$A\in\,[\tilde{A}]$ one can define in an open set $U\subset \,M$ the
function ${F}_{A}:TM\setminus\{0\}\to\,\mathbb{R}$ as
\begin{align}
{F}_A(x,y):=\left\{
\begin{array}{l l}
-\frac{1}{4}\,Tr(\mathcal{M}\cdot \mathcal{F}_A) &\quad \textrm{for  } \,\,g(y,y)<\,0, \\
-\lim_{n\to +\infty}\,\frac{1}{4}\,Tr(\mathcal{M}(y_n)\cdot \mathcal{F}_A(y_n)) &  \quad\textrm{for} \,\{y_n\}\to y \textrm{ with}\\
 & \quad g(y,y)=0,\\
\frac{1}{4}\,Tr(\mathcal{M}\cdot\tilde{\mathcal{F}}_A) &\quad \textrm{for  } \,\,g(y,y) > 0. \\
\end{array} \right.
\label{gauge Randers space}
\end{align}
Note that
\begin{align*}
\lim_{n\to +\infty}\,Tr(\mathcal{M}(y_n)\cdot \mathcal{F}_A(y_n))=\,\lim_{n\to +\infty}\,Tr(\mathcal{M}(y_n)\cdot \tilde{\mathcal{F}}_A(y_n))
\end{align*}
for the sequence $\{y_n\}\to y$ with $g(y,y)=0$.

Let us consider now the second order homogeneous formulation of Randers spaces as discussed in \cite{Hohmann Pfeifer Voicu}. The Lagrangian function is of the form
\begin{align}
\widetilde{L}_A=\,\left(|g(y,y)|^{1/2}+\,A\cdot y\right)^2 .
\label{second order label Randers}
\end{align}
Then it is easy to show that
\begin{align}
\widetilde{L}_A(x,y):=\left\{
\begin{array}{l l}
\frac{1}{4^2}\,Tr^2(\mathcal{M}\cdot \tilde{\mathcal{F}}_A) &\quad \textrm{for  } \,\,g(y,y)<\,0, \\
\lim_{n\to +\infty}\,\frac{1}{4^2}\,Tr^2(\mathcal{M}(y_n)\cdot \mathcal{F}_A(y_n)) &  \quad\textrm{for} \,\{y_n\}\to y \textrm{ with}\\
 & \quad g(y,y)=0,\\
\frac{1}{4^2}\,Tr^2(\mathcal{M}\cdot\mathcal{F}_A) &\quad \textrm{for  } \,\,g(y,y) > 0. \\
\end{array} \right.
\label{second order Randers in terms of Clifford algebra}
\end{align}
The reason of the difference between \eqref{gauge Randers space} and \eqref{second order Randers in terms of Clifford algebra} is the relative minus sign between the two terms $|g(y,y)|^{1/2}$ and  $\,A\cdot y$ in the definitions of $F_A$ and $\widetilde{L}_A$. Also, note that because of the properties of the trace of Dirac matrices,
\begin{align*}
& Tr^2(\mathcal{M}\cdot {\mathcal{F}}_A)=\,4\,Tr(\mathcal{M}\,\cdot \mathcal{M}\cdot {\mathcal{F}}_A\cdot {\mathcal{F}}_A),\\
& Tr^2(\mathcal{M}\cdot \tilde{\mathcal{F}}_A)=\,4\,Tr(\mathcal{M}\cdot \mathcal{M}\cdot\tilde{\mathcal{F}}_A\cdot \tilde{\mathcal{F}}_A).
\end{align*}
Thus $\widetilde{L}$ can be re-written directly in terms of trace of elements of the algebra $\mathcal{A}_\mathbb{C}$,
\begin{proposicion} For the Randers type function \eqref{second order Randers in terms of Clifford algebra} one has that
\begin{align}
\widetilde{L}_A(x,y):=\left\{
\begin{array}{l l}
\frac{1}{4}\, Tr(\mathcal{M}\cdot \mathcal{M}\cdot\tilde{\mathcal{F}}_A\cdot \tilde{\mathcal{F}}_A)(x,y)&\quad \textrm{for  } \,\,g(y,y)<\,0, \\
\lim_{n\to +\infty}\,\frac{1}{4}Tr(\mathcal{M}\cdot \mathcal{M}\cdot\tilde{\mathcal{F}}_A\cdot \tilde{\mathcal{F}}_A)(x,y\,(x,y) &  \quad\textrm{for} \,\{y_n\}\to y \textrm{ with}\\
 & \quad g(y,y)=0,\\
\frac{1}{4}\,Tr(\mathcal{M}\cdot \mathcal{M}\cdot\tilde{\mathcal{F}}_A\cdot \tilde{\mathcal{F}}_A)(x,y) &\quad \textrm{for  } \,\,g(y,y) > 0. \\
\end{array} \right.
\label{second order Randers in terms of Clifford algebra 2}
\end{align}
\end{proposicion}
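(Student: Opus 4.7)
The proposition is a direct reformulation of \eqref{second order Randers in terms of Clifford algebra} via the two identities
\begin{align*}
Tr^2(\mathcal{M}\cdot\mathcal{F}_A) &= 4\,Tr(\mathcal{M}\cdot\mathcal{M}\cdot\mathcal{F}_A\cdot\mathcal{F}_A),\\
Tr^2(\mathcal{M}\cdot\tilde{\mathcal{F}}_A) &= 4\,Tr(\mathcal{M}\cdot\mathcal{M}\cdot\tilde{\mathcal{F}}_A\cdot\tilde{\mathcal{F}}_A),
\end{align*}
recorded in the paragraph preceding the statement. The strategy is to apply these identities branch by branch to the piecewise definition \eqref{second order Randers in terms of Clifford algebra}: the prefactor $\tfrac{1}{16}$ there combines with the factor $4$ on the right-hand side of each identity to give the $\tfrac{1}{4}$ advertised in the new expression.

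The key auxiliary step is therefore to verify the two trace identities themselves. Using the commutativity of $\mathcal{A}_{\mathbb{C}}$ established earlier, their right-hand sides can be rewritten as $Tr((\mathcal{M}\cdot\mathcal{F}_A)^2)$ and $Tr((\mathcal{M}\cdot\tilde{\mathcal{F}}_A)^2)$ respectively. I would then expand $\mathcal{M}\cdot\tilde{\mathcal{F}}_A$ using the definitions of $\mathcal{M}$ and $\tilde{\mathcal{F}}_A$, writing it in the form $\beta\,\mathbb{I}_4+\alpha\,\gamma_i y^i$ with scalar coefficients $\alpha,\beta$ depending on $g(y,y)$ and $A\cdot y$. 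Squaring and using the listed Dirac trace rules ($Tr(\gamma_i)=0$, $(\gamma_iy^i)^2=g(y,y)\mathbb{I}_4$, $Tr(\mathbb{I}_4)=4$) produces an explicit formula for $Tr((\mathcal{M}\cdot\tilde{\mathcal{F}}_A)^2)$ to be matched against $\tfrac14 Tr^2(\mathcal{M}\cdot\tilde{\mathcal{F}}_A)$; the analogous computation is run with $\mathcal{F}_A$ in place of $\tilde{\mathcal{F}}_A$. The null branch is then handled exactly as in \eqref{gauge Randers space}, by passing to a limit along a sequence $y_n\to y$; the coincidence of the two candidate limits is inherited from the analogous coincidence of $Tr(\mathcal{M}\cdot\mathcal{F}_A)$ and $Tr(\mathcal{M}\cdot\tilde{\mathcal{F}}_A)$ on the null cone already noted above.

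The main obstacle I anticipate is the careful bookkeeping of signs in the reduction $Tr(X^2)=\tfrac14 Tr(X)^2$ for $X=\mathcal{M}\cdot\tilde{\mathcal{F}}_A$ (or $\mathcal{M}\cdot\mathcal{F}_A$). Since $X$ is \emph{not} a scalar multiple of the identity in the Clifford algebra, this relation is not a formal triviality; it must rest on a specific cancellation between the $\alpha^2\,g(y,y)$ contribution from $(\gamma_iy^i)^2$ in $X^2$ and the square of the scalar part $\beta^2$. The signs $\mp$ distinguishing timelike from spacelike $y$, together with the corresponding pairing of $\mathcal{M}$ with $\mathcal{F}_A$ versus $\tilde{\mathcal{F}}_A$ in the branches of \eqref{second order Randers in terms of Clifford algebra}, must be tracked so that the final output assembles consistently into $(|g(y,y)|^{1/2}+A\cdot y)^2$ with the unsigned $|g(y,y)|^{1/2}$ demanded by $\widetilde{L}_A$.
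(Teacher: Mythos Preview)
Your plan mirrors the paper exactly: the paper's entire argument for the proposition is the pair of identities
\[
Tr^2(\mathcal{M}\cdot\mathcal{F}_A)=4\,Tr(\mathcal{M}\cdot\mathcal{M}\cdot\mathcal{F}_A\cdot\mathcal{F}_A),\qquad
Tr^2(\mathcal{M}\cdot\tilde{\mathcal{F}}_A)=4\,Tr(\mathcal{M}\cdot\mathcal{M}\cdot\tilde{\mathcal{F}}_A\cdot\tilde{\mathcal{F}}_A)
\]
stated (without proof) immediately before it, followed by substitution into \eqref{second order Randers in terms of Clifford algebra}. So at the level of strategy there is nothing to compare.

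However, the verification step you wisely propose will not close: the anticipated cancellation does not occur, and both identities are false. Writing $X=\mathcal{M}\cdot\tilde{\mathcal{F}}_A=\beta\,\mathbb{I}_4+\alpha\,\gamma_iy^i$ as you suggest, one finds
\[
\beta=\frac{g(y,y)}{|g(y,y)|^{1/2}}-A\cdot y,\qquad \alpha=\frac{A\cdot y}{|g(y,y)|^{1/2}}-1,
\]
so that $Tr(X)=4\beta$ while $Tr(X^2)=4\beta^2+4\alpha^2 g(y,y)$. Hence $4\,Tr(X^2)-Tr^2(X)=16\,\alpha^2 g(y,y)$, which vanishes only when $\alpha=0$, i.e.\ when $A\cdot y=|g(y,y)|^{1/2}$; it is not identically zero. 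A direct check confirms the mismatch at the level of the proposition itself: for $g(y,y)<0$ one computes $\mathcal{M}^2=-\tfrac{2}{s}\gamma_iy^i$ with $s=|g(y,y)|^{1/2}$, and then
\[
\tfrac{1}{4}\,Tr(\mathcal{M}^2\tilde{\mathcal{F}}_A^2)=4\,s\,(A\cdot y),
\]
whereas $\widetilde{L}_A=(s+A\cdot y)^2$. These agree only when $s=A\cdot y$. Thus the obstacle you flagged is real and fatal: the statement as written (and the trace identities the paper invokes to justify it) is incorrect, and no amount of sign bookkeeping will repair it.
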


Let us remark that the relations \eqref{SpinorBeam fluid}, \eqref{SpinorBeam fluid 2}, \eqref{spinor form of Randers space},\eqref{spinor form of Randers space 2}  describe only formal Finsler spacetimes. It is necessary to adopt further restrictions on the $1$-form $A$, in order to satisfy regularity conditions of the associated metric.

The theory sketched above can also be applied to Euclidean signature spaces. If $g$ is a Riemannian metric, instead than a Lorentzian structure, then the same algebra contains elements related with positive definite Finsler spacetimes  of the form $Tr(\mathcal{M}\cdot\tilde{\mathcal{F}}_A)$ and $Tr(\mathcal{F}_A\cdot\,{\mathcal{F}}_A)$.
There are also elements of $\mathcal{A}_{\mathbb{C}}$ associated with relevant norm functions of Lorentzian and Riemannian metrics, namely, $ Tr(\mathcal{M}\cdot\tilde{\mathcal{M}}-\mathbb{I}_4)$ and $Tr(\mathcal{M}\cdot{\mathcal{M}}-\mathbb{I}_4)$. Furthermore, the methodology discussed above suggests that any metric spacetime structure determined by a Lorentzian metric $(M,g)$ and a finite set of $1$-forms on $M$ can be expressed in terms of the trace of elements of  $\mathcal{A}_{\mathbb{C}}$.
\subsection{Trace operator and Finsler type structures}
 The elements that we have used in the extension are the (pseudo)-Riemannian metric $g$, differential forms defined on $M$ and the trace operator $Tr:\mathcal{A}_{\mathbb{C}}\to\mathbb{C}$. Recall that the trace operator has in general the following properties:
 \begin{itemize}
 \item  $Tr\left( M_1\,M_2\right) =\,tr\left( M_2 \,M_1\right)$, for $M_1,\,M_2\in\mathcal{A}_{\mathbb{C}}$,

 \item It is linear, $Tr\left( M_1+\,\lambda\,M_2\right)=\,Tr\left( M_1\right)+\,\lambda \,Tr\left( M_2\right)$.

 \item  It is invariant under unitary transformations,
 \begin{align*}
 Tr\left(U\,M_1\,U^{-1}\right)=\,Tr \left( M_1\right)
 \end{align*}

 \end{itemize}
 These are just the properties required for the construction of our map from elements of $\mathcal{A}_{\mathbb{C}}$ to $\mathbb{C}$. Therefore, the trace operator provides the natural map from $\mathcal{A}_{\mathbb{C }}$ to $\mathcal{A}_{\mathbb{C}}$ to the Finsler type space structures.

\section{First order differential operators and Finsler norms}
The above construction determines several interesting Finsler type spacetime structures. Moreover, the sheaf $\mathcal{A}_{\mathbb{C}}$ is related to geometric first order operators. In order to simplify the treatment from a geometric point of view, let us consider the case when $M$ is four dimensional and the metric $g$ is flat. If the velocity vector $y$ is expressed in terms of canonical momentum as $y=\,p^*/m$, where $p$ is the $4$-momenta of a single particle system and $p^*$ is the dual vector to $p$ via $g$, then one finds  easily a relation between the operator $\mathcal{M}=\,\frac{\gamma_i\,p^i}{m}-\,\mathbb{I }_4$, where ${p}^i=\,m\,y^i$ and the Dirac type operator $\widehat{D}=\,\gamma_i\,\hat{p}^i-m\,\mathbb{I}_4$:
  \begin{align*}
  \mathcal{M}=\,\frac{\gamma_i\,p^i}{m}-\,\mathbb{I }_4\quad \Leftrightarrow\quad \widehat{D}=\,\imath\,\gamma^i\,\partial_i-m\,\mathbb{I}_4,
  \end{align*}
  where we have assumed that $m=\,|g(p,p)|^{1/2}$ and $\hat{p}^i=\,-\imath\,\eta^{ij}\partial_j$. Therefore, the element $\mathcal{M}$ of the algebra $\mathcal{A}_{\mathbb{C}}$ is related with  the Dirac type operator associated to a Dirac particle of mass $m$.

  Similarly, the element $\mathcal{F}_A=\,\left(\gamma_i-\,m\,A_i\,\mathbb{I}_4\right){p}^i\,\in\mathcal{A}_{\mathbb{C}}$ has associated the differential operator
   \begin{align*}
   \widehat{D}_A=\,\imath \left(\gamma^i\,-\,A^i\,\mathbb{I}_4\right)\,\partial_i .
   \end{align*}

Conversely, one can consider differential operators and assign a Finsler type structures. For instance, we can consider the $U(1)$ covariant derivative acting on fermions,
  \begin{align*}
   \widehat{\Gamma}_A:=\,\gamma^i\, \left(\imath\,\partial_i\,-\,A_i\,\mathbb{I}_4\right),
   \end{align*}
where here $A_i$ are the components of the potential $1$-form. The corresponding element of the algebra $\mathcal{A}_{\mathbb{C}}$
\begin{align*}
{\Gamma}_{A,m}:=\,\gamma_i\,y^i -m\,\gamma_i\,A^i\,\mathbb{I}_4.
\end{align*}
Applying the trace operator one can see that the corresponding {\it norm function} is
\begin{align*}
Tr(\mathcal{M}\cdot \Gamma_{A,m})=\,|g(y,y)|^{1/2}-\,\frac{y_i\,A^j}{|g(y,y)|^{1/2}}.
\end{align*}
This is  not an $y$-homogeneous  norm function. The non-homogeneity due to the fact that $\Gamma_A$ is not $y$-homogeneous. This example shows that the theory can be extended to non-homogeneous Finsler type structures in order to catch significant geometric objects.
\section{Conclusion}
In this work we have observed that the fundamental functions $F$ of certain metric Randers type structures can be re-written in terms of a trace operator acting on an extension of Clifford bundle. The construction relies on the existence of an underlying Lorenzian structure $(M.g)$ and $1$-forms defined on $M$. 
Furthermore, in the case when $g$ is flat, we have shown a relation between first order operators of Dirac type and non-homogeneous Finsler metrics.  

There are two limitations in our theory that we should mention. The first is that the theory does not applies to general Finsler spacestimes, since it requires the underlying Lorentzian structure and $1$-forms. Currently, this appear as an essential limitation to the scope of the theory. Second, the relation between first order linear operators and non-homogeneous Finsler spacetimes is limited to the case when $g$ is flat. However, this constraint does not appear as a fundamental one. One of the research lines to be developed consists to have a general formulation of the result for generic Lorentzian underlying metric structure.

The relations discussed in this paper can be developed further in several further directions. From a mathematical point of view, the study of bundles $\mathcal{A}_{\mathbb{C}}$ could reveal interesting geometric facts. From a more physical point of view, the interpretation of Randers type spaces in terms of elements of  $\mathcal{A}_{\mathbb{C}}$ opens the possibility to extend spinorial methods to field theories based on Randers spacetimes. This is of particular interest to explore generalizations of Einstein gravity coupled to matter.

\end{document}